\newcommand{\myprob}[2]{
\begin{enumerate}[label=\textbf{P\arabic*}]
\setcounter{enumi}{\value{#1}}
#2
\setcounter{#1}{\value{enumi}}
\end{enumerate}}
\newcommand{\myassum}[2]{
\begin{enumerate}[label=\textbf{A\arabic*}]
\setcounter{enumi}{\value{#1}}
#2
\setcounter{#1}{\value{enumi}}
\end{enumerate}}
\newcommand{\bfg}[1]{\boldsymbol{#1}}
\newcommand{\rem}[1]{}
\newtheorem{proposition}{Proposition}
\newtheorem{theorem}{Theorem}
\newtheorem{lemma}{Lemma}
\newtheorem{remark}{Remark}
\newtheorem{assumption}{Assumption}
\newenvironment{proof}[1][Proof]{\begin{trivlist}
\item[\hskip \labelsep {\bfseries #1}]}{\end{trivlist}}
\newcommand{\red}[1]{{\color{red} #1}}
\newcommand{\qed}{\nobreak \ifvmode \relax \else
      \ifdim\lastskip<1.5em \hskip-\lastskip
      \hskip1.5em plus0em minus0.5em \fi \nobreak
      \vrule height0.75em width0.5em depth0.25em\fi}
\newcommand{\expect}[1]{\mathbb{E}\lbrace #1\rbrace}
\newcommand{\norm}[1]{\left\lVert#1\right\rVert}
\newcommand{\mysubeq}[2]{
\begin{subequations}\label{#1}
\begin{align}
#2
\end{align}\end{subequations}}
\begin{document}
%
\title{Optimal Energy Consumption Forecast for Grid Responsive Buildings: A Sensitivity Analysis}

\author{\IEEEauthorblockN{Soumya Kundu, Thiagarajan Ramachandran, Yan Chen and Draguna Vrabie}
\IEEEauthorblockA{Electricity Infrastructure and Buildings Division\\
Pacific Northwest National Laboratory\\
Richland, WA 99352 USA\\
Email:\,\{soumya.kundu, thiagarajan.ramachandran, yan.chen, draguna.vrabie\}\,@pnnl.gov}}


%


\maketitle

\begin{abstract}
It is envisioned that building systems will become active participants in the smart grid operation by controlling their energy consumption to optimize complex criteria beyond ensuring local end-use comfort satisfaction. A forecast of the building energy consumption will be necessary to enable integration between building and grid operation. Such forecast will be affected by parametric and measurement uncertainty. In this paper we develop a methodology for quantifying the sensitivity of optimal hourly energy consumption forecasts to various sources model and measurement uncertainty. We demonstrate the approach for a building heating ventilation and air conditioning (HVAC) system use-case.
\end{abstract}


%
\IEEEpeerreviewmaketitle

\section{Introduction}
Commercial and residential buildings contribute to 72\% of all electricity consumption in the US. The transformation of the building systems from passive, albeit efficient, consumers to active participants in the smart grid operation constitutes a significant element towards improving grid resilience to extreme weather events and operation reliability in presence of large renewable integration. This can be accomplished by controlling and optimizing the individual building energy consumption, beyond simply ensuring end-use comfort satisfaction with minimal energy consumption. In a smart-grid operation context, residential and commercial buildings will be managing their energy consumption in accordance with some time-varying grid signals, by optimizing complex cost functions subject to satisfaction of end-use comfort constraints. 

Three principal questions need to be addressed in order to accomplish effective integrated operation between building energy systems and the power grid: (a) a priory evaluation of capability of a given building system to provide grid services, (b) control methodology development to enable building-grid coordinated operation (c) evaluation of energy consumption baseline associated with nominal operation of the building energy system. Numerous publications over the last decade are addressing these three questions with a strong focus on development of control concepts that enable integrated operation between building systems and the power grid. Without attempting to provide a comprehensive review we note few representative efforts. Methodology for assessing the capability of building energy systems to provide grid services has been proposed in \cite{Oldewurtel:2013}. Development and testing of control methods for ancillary services provision by commercial and residential building energy systems is reported in \cite{Lin_Barooah:2017} and \cite{Afshari:2017}. Methodology for calibrating baseline energy consumption models has been proposed in \cite{Coughlin:2009} and \cite{Sharifi:2016}. 

To ensure fair compensation for participation of intelligent building energy systems in the service markets, it is of critical importance to accurately forecast energy consumption baseline associated with nominal operation. This is consistent with current paradigms for grid operation and market designs, as well as proposed market designs for aggregated demand side flexibility as presented in \cite{Eid:2015}. In \cite{Mathieu:2011} the authors provide a detailed account of observed variability between the expected and measured building response. They note that while the variability occurs as a result of the combination of unmodeled internal and external load variability and equipment performance uncertainty, the largest amount of variability is due to baseline model error. This challenges the implementation of programs that use baselines for financial settlement such as demand/capacity bidding programs and ancillary services markets, or the Measurement and Verification efforts that are oriented towards evaluating the cost-effectiveness of building-grid integration.

Recognizing the effect of uncertainty in the baseline calculation, in this paper we develop a methodology for quantifying the sensitivity of optimal hourly energy consumption to various sources of parametric and measurement uncertainties. We demonstrate the developed mathematical approach for a commercial building HVAC system example. The approach enables quantitative assessment of the sensitivity of the estimated optimal baseline consumption under various operating conditions, both to parametric and measurement uncertainty. Herein we define the baseline as the optimal hourly energy consumption; this baseline consumption can be computed by solving a nonlinear optimization problem based on information of estimated load forecasts and parametric models describing the energy consumption performance of building equipment. Discrepancies in the sensors measurements, equipment models, or external or internal system loads (e.g. temperature, occupancy) will be reflected in the error associated with the estimated baseline consumption forecasts. 

The paper is organized as follows: in Section \ref{sec:Methodology} we develop the approach to evaluate optimal baseline performance and quantifying the sensitivity and calculating the error bars associated with uncertainties in the system model parameters and exogenous inputs. In Section \ref{sec:Baseline} we describe the specific use case example system and details the calculation of the baseline hourly energy consumption. In Section \ref{sec:Example} we provide the numerical results and capture the associated observations and insights.

\section{Sensitivity Analysis: Methodology}
\label{sec:Methodology}
The optimization problem to determine the baseline can be represented compactly as follows:
\mysubeq{E:optim}{
\text{(P0):}\quad\underset{x}{\text{minimize}}\quad& J(x,w)\\
\text{subject to}\quad& h(x,w)\leq 0\,,\,h:\mathbb{R}^m\!\times\!\mathbb{R}^p\!\mapsto\!\mathbb{R}^n\\
& x\in\mathbb{R}^m,\,w\in\mathbb{R}^p,\,
}
where $p$-dimensional vector $w$ denote the combination of exogenous inputs (forecast data, sensor measurements, thermal load profiles) and model parameters (both known and identified); $m$-dimensional vector $x$ denote the system variables, including the controller set-points; $J\!:\!\mathbb{R}^m\!\times\!\mathbb{R}^p\!\mapsto\!\mathbb{R}$ is the objective function (building energy consumption); while the feasibility domain is defined via a set of $n$ inequality constraints $h=[h_1,\, h_2,\, \dots,\,h_n]^T$. Note that any equality constraint can be represented by two inequality constraints.
\begin{assumption}\label{AS:poly}
$J,\,h$ and $g$ are \textit{analytic functions} \cite{Boas:1935}, i.e. the functions have continuous derivatives of \textit{all order} in $x$ and $w$.
\end{assumption}

We are interested in evaluating the performance and sensitivity of the optimal solution to uncertainties in the parameters and exogenous inputs (forecasts, sensors measurements). Let us model the (additive) uncertainties in the exogenous input and parameters as 
\begin{align}
w = w_0+\delta w
\end{align}
where $w_0$ denote the nominal (estimated) values and $\delta w$ are the bounded additive uncertainties around the nominal values. 

\subsection{Nominal Optimal Solution}

The optimization problem \eqref{E:optim} can be solved using the nominal values of the exogenous input and parameters, $w=w_0$\,, to find the optimal values of the system variables, $x=x_0$\,, that minimize the value of objective function:
\begin{align}
J_0:=J(x_0,w_0)\,.
\end{align}
The Karush-Kuhn-Tucker (KKT) conditions for the optimal solution are given by \cite{Bertsekas:1999}:
\mysubeq{E:KKT}{
\nabla_x J +{\sum}_{i=1}^n\lambda_i\,\nabla_x h_i&=0 \label{E:station}\\
\lambda_i\,h_i&=0~\forall i\in\lbrace 1,\dots,n\rbrace \label{E:ComSlac}\\
\lambda_i &\geq 0~\forall i\in\lbrace 1,\dots,n\rbrace\\
h_i &\leq 0~\forall i\in\lbrace 1,\dots,n\rbrace\,.
}
The non-negative scalars $\lambda=[\lambda_1,\lambda_2,\dots,\lambda_n]^T$ are the Lagrange multipliers. Note that the condition \eqref{E:ComSlac} (referred to as the `complementary slackness' condition) implies that either the inequality constraint is \textit{active} (or, \textit{binding}), i.e. $h_i=0$\,, or the corresponding Lagrange multiplier is zero. The condition \eqref{E:ComSlac} together with the `stationarity' condition \eqref{E:station} define the subspace
\begin{align}
\mathcal{X}_0:=\left\lbrace x\in\mathbb{R}^n\,\left|\begin{array}{r}
\nabla_xJ +{\sum}_{i=1}^n\lambda_i\,\nabla_x h_i=0\\
\forall i:~\lambda_i\,h_i=0\end{array}\right.\right\rbrace
\end{align}
which contains the nominal optimal solution, i.e. $x_0\in\mathcal{X}_0$\,. For brevity, let us introduce the notation
\begin{align}
H(x,w;\lambda):=\left[ \begin{array}{c}
\nabla_xJ +\lambda^T\,\nabla_x h\\
\lambda_1\,h_1\\
\vdots\\
\lambda_n\,h_n\end{array} \right]
\end{align}
such that, $\mathcal{X}_0=\left\lbrace x\in\mathbb{R}^n\,\left|\,H(x,w_0;\lambda)=0\right.\right\rbrace$\,.


\subsection{Sensitivity of Optimal Solution}

We are interested in estimating the changes in the optimal value of the cost function in response to small changes in the exogenous input and parameter values, $w=w_0+\delta w$\,. Specifically, we are interested in the following problem:
\mysubeq{E:optim1}{
\text{(P1):}\quad\underset{x}{\min}\quad& J(x,w_0+\delta w)\\
\text{s.\,t.\,}\quad& h(x,w_0+\delta w)\leq 0\,.
}
Solving the problem P1 \eqref{E:optim1} for every possible small values of $\left|\delta w\right|\leq \Delta$, for some given uncertainty bounds $\Delta\in\mathbb{R}_{\geq 0}^p$, is going to be computationally challenging, especially when the dimension of $w$ is large. Henceforth, we resort to solving an alternative problem which allows us to estimate the changes in optimal cost function without having to solve the P1 \eqref{E:optim1} for all possible uncertainty scenarios. In order to do so, let us make the following assumption:
\begin{assumption}\label{AS:H}
Changes in the exogenous input and parameter values, $\delta w=w-w_0$\,, are sufficiently small such that the optimal solution of P1 \eqref{E:optim1}, denoted by $x=x_0+\delta x$\,, lie on the subspace 
\begin{align*}
\mathcal{X}_1=\left\lbrace x\in\mathbb{R}^n\,\left|\,H(x,w_0+\delta w;\lambda)=0\right.\right\rbrace.
\end{align*}
\end{assumption}

The assumption says that the equations $H\!=\!0$ determine how the optimal values of the internal system variables $x$ change as the exogenous input and parameters $w$ experience small deviations around their nominal values. Note that the assumption is generally valid because of Assumption\,\ref{AS:poly}. Accordingly, we formulate an alternative to P1:
\mysubeq{E:optim2}{
\text{(P2):}\quad\underset{x}{\min}\quad& J(x,w_0+\delta w)\\
\text{s.\,t.\,}\quad& H(x,w_0+\delta w;\lambda)= 0\,,\label{E:optim2_c1}
}
where $E$ is a $\left|\mathcal{C}\right|\times m$ dimensional matrix of 0's and 1's which compactly represents the sets of equations $x_i=(x_0)_i~\forall i\in\mathcal{C}$\,, i.e. the first row of $E$ has a $1$ at the column indexed by the first element of $\mathcal{C}$ and 0's at all other colums\,, and likewise. Note that, when $H(x,w_0+\delta w;\lambda)=0$ has a unique solution in $x$\,, the problem P2 simply reduces to finding a solution to a set of (nonlinear) equations. Specifically, we assume the following:
\begin{assumption}\label{AS:unique}
There exists a unique $x$ satisfying \eqref{E:optim2_c1} over a range of bounded uncertainties $\left|\delta w\right|\!\leq\! \Delta$, for some $\Delta\!\in\!\mathbb{R}_{\geq 0}^p$.
\end{assumption} 

\begin{proposition}
The change in the optimal cost function value to (sufficiently) small changes in exogenous input and parameter values, $\delta w=w-w_0\,,$ can be approximated by\footnote{Note that $K(0)=0\,.$} 
\mysubeq{}{
K(\delta w)&:=J(x_0+G^+d\,,\,w_0+\delta w)-J_0\,,\\
\text{where, }~G^+&=\left(G^TG\right)^{-1}G^T\,,~G=\left.\nabla_x H\right|_{(x_0,w_0;\lambda)},\\
d&=-\left.\nabla_w H\right|_{(x_0,w_0;\lambda)}\cdot\delta w.
}
\end{proposition}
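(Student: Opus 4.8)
The argument is a first-order perturbation analysis of the defining relation $H=0$, with the Lagrange multiplier vector $\lambda$ held at its nominal value throughout (consistent with the definitions of $\mathcal{X}_0$ and $\mathcal{X}_1$, in which $\lambda$ enters only as a fixed parameter). By Assumption\,\ref{AS:H}, the optimizer of P1 \eqref{E:optim1} under the perturbation is $x_0+\delta x$ with $H(x_0+\delta x,\,w_0+\delta w;\lambda)=0$. Since $J$ and $h$ — hence $H$ — are analytic by Assumption\,\ref{AS:poly}, I would Taylor-expand $H$ about the nominal point $(x_0,w_0)$ and use that $x_0\in\mathcal{X}_0$ gives $H(x_0,w_0;\lambda)=0$; retaining only first-order terms in $(\delta x,\delta w)$ yields the linear system $G\,\delta x=d$ with $G=\nabla_x H|_{(x_0,w_0;\lambda)}$ and $d=-\nabla_w H|_{(x_0,w_0;\lambda)}\cdot\delta w$, exactly as in the statement.

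Next I would solve this system for $\delta x$. Because $H$ stacks the $m$ stationarity equations \eqref{E:station} and the $n$ complementary-slackness equations \eqref{E:ComSlac}, the Jacobian $G$ is $(m+n)\times m$ — tall, and generically of full column rank $m$. Assumption\,\ref{AS:unique} (uniqueness of the solution of $H(\cdot,w_0+\delta w;\lambda)=0$ for bounded $|\delta w|\le\Delta$) is what lets me assert that the linearized system $G\,\delta x=d$ is consistent to first order and that $G^TG$ is invertible, so its unique solution is $\delta x=G^+d$ with $G^+=(G^TG)^{-1}G^T$. Substituting back into the objective, the perturbed optimal value is $J(x_0+G^+d,\,w_0+\delta w)$ up to higher-order corrections, so the change relative to $J_0$ is $K(\delta w)=J(x_0+G^+d,\,w_0+\delta w)-J_0$. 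For the footnote, note that $\delta w=0$ forces $d=0$, hence $G^+d=0$ and $K(0)=J(x_0,w_0)-J_0=0$.

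The step I expect to be the main obstacle is replacing the true perturbation $\delta x$ by the pseudoinverse expression $G^+d$: since $G$ is overdetermined, one must argue that the linearized constraint $G\,\delta x=d$ is genuinely consistent (to first order), so that $G^+d$ solves it rather than merely minimizing a residual — and this is precisely where Assumptions\,\ref{AS:H} and \ref{AS:unique} carry the weight, the former asserting that the perturbed optimizer really lies on $\mathcal{X}_1$, the latter pinning it down uniquely. A secondary point needing care is justifying that full column rank of $G$ (needed for $(G^TG)^{-1}$ to exist) is implied by, or at least compatible with, these hypotheses rather than an extra unstated assumption; an implicit-function-theorem viewpoint on $H=0$ is the natural way to make that link, and it would also control the neglected higher-order terms in terms of $|\delta w|$.
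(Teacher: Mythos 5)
Your argument is correct and follows essentially the same route as the paper's own proof: a first-order Taylor expansion of $H=0$ about $(x_0,w_0)$ using $H(x_0,w_0;\lambda)=0$, the compact form $G\,(x-x_0)=d$, and Assumption~\ref{AS:unique} to justify invertibility of $G^TG$ and hence $x-x_0=G^+d$, which is then substituted into $J$. Your closing remarks on consistency of the overdetermined linear system and full column rank of $G$ go slightly beyond what the paper spells out, but they do not change the approach.
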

\begin{proof}
Note that, under the Assumption\,\ref{AS:H}, for small changes in $w=w_0+\delta w$\,, we can use Taylor series expansion to claim that the optimal solution $x$ would satisfy
\begin{align*}
\left.\nabla_x H\right|_{x_0,w_0;\lambda}\cdot (x-x_0)+\left.\nabla_w H\right|_{x_0,w_0;\lambda}\cdot \delta w &\approx 0\,,
\end{align*}
where we used $H(x_0,w_0)=0$\,. The above can be expressed in the compact form $G\cdot\left(x-x_0\right)=d$ which has a unique solution under Assumption\,\ref{AS:unique}. Specifically, the solution is unique if $G^TG$ is invertible, in which case $x-x_0=G^+d$\,, where $G^+=\left(G^TG\right)^{-1}G^T$ is the Moore-Penrose pseudo-inverse of the matrix $G$\,, \cite{Albert:1972}.
\hfill\hfill\qed\end{proof}
\begin{remark}
When Assumption\,\ref{AS:unique} does not hold, there is no unique solution to $G\cdot\left(x-x_0\right)=d$\,. In such cases, when $GG^T$ is invertible, there exist many solutions of the form $x-x_0=WG^T\left(GWG^T\right)^{-1}$ for every arbitrarily chosen positive definite matrix $W\in\mathbb{R}^{m\times m}$. While the optimal solution corresponds to some $W$\,, it is generally a non-convex problem to compute that.  
\end{remark}

Once the relationship between the change in input and parameters to the change in optimal cost function value is established, we proceed to computing the bounds on the change in cost function values for bounded uncertainties in the input and parameters. Specifically, in this paper, we focus our analysis to cases when the uncertainties in the input and parameters are bounded as follows,
\begin{align}
\left|\delta w\right|\leq \Delta
\end{align}
where $\Delta\in\mathbb{R}_{\geq 0}^p$ is a non-negative vector. We are interested in solving the following problem,
\mysubeq{E:optim_beta}{
\text{(P4):}~\,\text{minimize}& \quad\beta\\
\text{subject to} &~\left|K(\delta w)\right|\leq \beta \quad\forall \left\lbrace\delta w\left|\,~\left|\delta w\right|\leq \Delta\right.\right\rbrace.\label{E:Kw}
}

Solution of the problem P4 is non-trivial for generic forms of $K(\cdot)$\,. However, under special circumstances, it is possible to find either exact or an upper bound or an approximation of the solution to the optimization problem \eqref{E:optim_beta}. We present two approaches based on analytical and computational methods to find a solution to P4.

\subsubsection{Analytical Solution}

In the specific case when the function $K(\cdot)$ is at most a quadratic polynomial, it is possible to compute an upper bound of the solution to P4.
\begin{lemma}
If the polynomial $K(\cdot)$ has degree $\leq 2$\,, then 
\begin{align}
\beta\leq \norm{\nabla K(0)}_1\cdot\|\Delta\|_\infty+p\,\norm{M}_2^2\cdot\|\Delta\|_\infty^2
\end{align}
where $M^TM=\norm{\nabla^2K(0)}$\,; $\nabla K(0)$ and $\nabla^2 K(0)$ are the gradient and Hessian of $K(\cdot)$ evaluated at $\delta w=0$\,, while $\norm{\cdot}_1\,,\,\norm{\cdot}_2$ and $\norm{\cdot}_{\infty}$ denote the $l_1\,,\,l_2$ and $l_{\infty}$-norms, respectively.
\end{lemma}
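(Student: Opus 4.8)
The plan is to exploit the fact that a polynomial of degree at most two agrees everywhere with its second-order Taylor expansion about any point, with no remainder. Expanding $K$ about $\delta w=0$ and using $K(0)=0$ gives the \emph{exact} identity
\begin{align*}
K(\delta w)=\nabla K(0)^{T}\delta w+\tfrac12\,\delta w^{T}\nabla^{2}K(0)\,\delta w ,
\end{align*}
so the problem reduces to bounding, uniformly over the box $\lbrace\delta w:\ |\delta w|\le\Delta\rbrace$, the absolute value of a linear form plus a quadratic form in $\delta w$. Since the estimate we seek is independent of $\delta w$, any such uniform bound is a feasible value of $\beta$ in P4, hence an upper bound on the optimal $\beta$ (which equals $\sup_{|\delta w|\le\Delta}|K(\delta w)|$, the supremum being attained because the box is compact and $K$ continuous).

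For the linear term I would apply H\"older's inequality, $|\nabla K(0)^{T}\delta w|\le\norm{\nabla K(0)}_{1}\norm{\delta w}_{\infty}$, and then observe that the componentwise bound $|\delta w|\le\Delta$ forces $\norm{\delta w}_{\infty}\le\norm{\Delta}_{\infty}$; this already delivers the first term $\norm{\nabla K(0)}_{1}\norm{\Delta}_{\infty}$.

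For the quadratic term I would use the symmetric factorization encoded in the hypothesis $M^{T}M=\norm{\nabla^{2}K(0)}$: diagonalizing $\nabla^{2}K(0)=Q\Lambda Q^{T}$ and taking $M=|\Lambda|^{1/2}Q^{T}$, one has $|\delta w^{T}\nabla^{2}K(0)\,\delta w|\le\sum_{i}|\lambda_{i}|(q_{i}^{T}\delta w)^{2}=\norm{M\delta w}_{2}^{2}\le\norm{M}_{2}^{2}\norm{\delta w}_{2}^{2}$. Converting the Euclidean norm to the sup norm via the elementary inequality $\norm{\delta w}_{2}^{2}\le p\,\norm{\delta w}_{\infty}^{2}\le p\,\norm{\Delta}_{\infty}^{2}$ bounds the quadratic contribution by $p\,\norm{M}_{2}^{2}\norm{\Delta}_{\infty}^{2}$ (the factor $\tfrac12$ from Taylor's formula only making the stated bound a safe over-estimate). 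Combining the two estimates through the triangle inequality yields $|K(\delta w)|\le\norm{\nabla K(0)}_{1}\norm{\Delta}_{\infty}+p\,\norm{M}_{2}^{2}\norm{\Delta}_{\infty}^{2}$ for every admissible $\delta w$, which is the claim.

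The main obstacle is the quadratic term: $\nabla^{2}K(0)$ is symmetric but in general \emph{not} positive semidefinite, so it cannot literally be written as $M^{T}M$; one has to read $\norm{\nabla^{2}K(0)}$ as the PSD ``matrix absolute value'' $Q|\Lambda|Q^{T}$ and then verify that $\norm{M}_{2}^{2}=\norm{Q|\Lambda|Q^{T}}_{2}=\rho(\nabla^{2}K(0))=\norm{\nabla^{2}K(0)}_{2}$, so that the indefinite quadratic form is still controlled by $\norm{M\delta w}_{2}^{2}$. Once this sign issue is dispatched, the remaining steps are routine applications of H\"older's inequality, the operator-norm bound $\norm{M\delta w}_{2}\le\norm{M}_{2}\norm{\delta w}_{2}$, and the norm comparison $\norm{\cdot}_{2}\le\sqrt{p}\,\norm{\cdot}_{\infty}$ on $\mathbb{R}^{p}$.
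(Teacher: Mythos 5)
Your proposal is correct and follows essentially the same route as the paper's proof: exact second-order Taylor expansion about $\delta w=0$ using $K(0)=0$, H\"older's inequality for the linear term, a square-root factorization of the Hessian together with the operator-norm bound for the quadratic term, and the comparison $\norm{\delta w}_2\leq\sqrt{p}\,\norm{\delta w}_\infty$. The only differences are refinements in your favor: you keep the $\tfrac12$ in the Taylor formula (the paper drops it, which still yields a valid over-estimate), and you resolve the indefinite-Hessian issue by taking $M$ real from the PSD matrix absolute value $Q|\Lambda|Q^T$, which is cleaner and more rigorous than the paper's complex square root $M\in\mathbb{C}^{p\times p}$ with $M^TM=\nabla^2K(0)$, for which the inequality $\left|(\delta w)^T\nabla^2K(0)\,\delta w\right|\leq (M\,\delta w)^T(M\,\delta w)$ is not justified when the Hessian is indefinite.
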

\begin{proof}
Since $K(0)=0$\,, we can use Taylor series expansion to obtain
\begin{align*}
K(\delta w) = \nabla K(0)\cdot\delta w + (\delta w)^T\cdot\nabla^2K(0)\cdot \delta w
\end{align*}
Note that from H{\"o}lder's inequality\footnote{$|x^Ty|\leq |x|_n|y|_m$ for some $n,m$ satisfying $1/n+1/m=1$\,.} \cite{Hardy:1952} we have $\left|\nabla K(0)\,\delta w\right|\leq \norm{\nabla K(0)}_1\,\|\Delta\|_\infty $\,, while 
\begin{align*}
\left|(\delta w)^T\,\nabla^2K(0)\, \delta w\right|\leq (M\,\delta w)^T(M\,\delta w)\leq \norm{M}_2^2\norm{\delta w}_2^2
\end{align*}
where $M\in\mathbb{C}^{p\times p}$ is defined as the \textit{square-root} of the Hessian $\nabla^2K(0)$\,, i.e. $M^TM = \nabla^2K(0)$\,. The rest of the proof follows by applying the inequality $\norm{\delta w}_2\leq \sqrt{p}\,\norm{\delta w}_{\infty}$\,.
\hfill\hfill\qed\end{proof}

This result is applicable when one is interested in finding out the maximal deviation expected in the optimal cost function, given that maximal deviation in any input and parameter is within some known tolerance. However, other norm-bounded analysis can also be formulated using the same framework. 

\subsubsection{Computational Method}

When $K(\cdot)$ is a polynomial or a rational function, P4 can be solved using polynomial optimization techniques such as Sum-of-Squares (SOS) programming. As the name suggests, sum-of-squares (SOS) polynomials are any polynomial that can be expressed as a finite sum of squared polynomials. Using Putinar's Positivstellensatz (P-satz) theorem \cite{Putinar:1993,Lasserre:2009} (included here for completeness) we can translate semi-algebraic constraints of the form of \eqref{E:Kw} into SOS feasibility conditions. 

\begin{theorem}
[P-satz] Consider a compact domain $ \mathcal{K}\!=\!\!\left\lbrace x\!\in\!\mathbb{R}^n\!\left|\, g_i(x)\!\geq\! 0\,\forall i\!=\!1,\dots,m\right.\right\rbrace$, where $g_i(\cdot)\,\forall i$ are polynomials and $\left\lbrace x\left|\,g_i(x)\geq 0\right.\right\rbrace\,\forall i$ define compact domains. A polynomial $f(x)$ is positive on $\mathcal{K}$ if and only if there exist SOS polynomials $\sigma_i(x)$ such that $f(x) - \sum_i \sigma_i(x)g_i(x)$ is SOS.
\end{theorem}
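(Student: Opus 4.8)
The ``only if'' direction is immediate and I would dispatch it first: if $f=\sigma_0+\sum_i\sigma_i g_i$ with each $\sigma_j$ a sum of squares, then for every $x\in\mathcal{K}$ we have $g_i(x)\geq 0$ and $\sigma_j(x)\geq 0$, so $f(x)\geq 0$ on $\mathcal{K}$ --- the strict positivity in the statement being understood modulo the standard caveat that a sum-of-squares certificate by itself only certifies nonnegativity. All the content is therefore in the ``if'' direction, and I would follow Putinar's functional-analytic route. Introduce the quadratic module $M:=\{\sigma_0+\sum_{i=1}^m\sigma_i g_i:\sigma_0,\dots,\sigma_m\text{ SOS}\}\subseteq\mathbb{R}[x]$; the goal is to prove that $f>0$ on $\mathcal{K}$ forces $f\in M$.

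\emph{Step 1 (reduction to the Archimedean case).} Since $\mathcal{K}$ is compact, choose $N>0$ with $\mathcal{K}\subset\{x:\|x\|^2<N\}$ and adjoin the redundant constraint $g_{m+1}(x):=N-\|x\|^2\geq 0$, which leaves $\mathcal{K}$ unchanged. Then $M$ contains $N-\|x\|^2$, hence is Archimedean: for every $p\in\mathbb{R}[x]$ there is $c>0$ with $c\pm p\in M$. A short algebraic manipulation then shows $1$ lies in the algebraic core of the convex cone $M$, i.e. for each $p$ there is $\varepsilon>0$ with $1+sp\in M$ whenever $|s|\leq\varepsilon$.

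\emph{Step 2 (separation and moment representation).} Suppose, for contradiction, that $f>0$ on $\mathcal{K}$ but $f\notin M$. Because $M$ is convex with $1$ in its core, an Eidelheit-type separation theorem yields a nonzero linear functional $L:\mathbb{R}[x]\to\mathbb{R}$ with $L\geq 0$ on $M$ and $L(f)\leq 0$; normalize $L(1)=1$. Since every SOS polynomial lies in $M$, $L$ is positive semidefinite, and since $\sigma g_i\in M$ for every SOS $\sigma$ the associated localizing forms are positive semidefinite as well. The Archimedean property forces $L$ to be a bounded moment functional, so by the Archimedean solution of the multivariate moment problem there is a nonzero positive Borel measure $\mu$ with $L(q)=\int q\,d\mu$ for all $q\in\mathbb{R}[x]$, and the conditions $L(\sigma g_i)\geq 0$ confine the support of $\mu$ to $\mathcal{K}$. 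Then $L(f)=\int_{\mathcal{K}}f\,d\mu>0$, since $f>0$ on $\mathcal{K}$ and $\mu\neq 0$, contradicting $L(f)\leq 0$. Hence $f\in M$, which is precisely the asserted representation.

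\emph{Main obstacle.} The genuinely difficult step --- the one carrying the real-algebraic-geometry content --- is Step 2's passage from a functional that is merely nonnegative on the quadratic module to integration against a measure supported on $\mathcal{K}$, i.e. the Archimedean multivariate moment problem. The standard route is a GNS/Hilbert-space construction: complete $\mathbb{R}[x]$ under the inner product $\langle p,q\rangle:=L(pq)$, show that multiplication by each coordinate $x_j$ extends to a \emph{bounded} self-adjoint operator on the completion (boundedness is exactly where Archimedeanity is used), apply the spectral theorem to the resulting commuting tuple to obtain a projection-valued measure, and check from $L(\sigma g_i)\geq 0$ that the corresponding scalar spectral measure lives on $\mathcal{K}$. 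Making this operator-theoretic argument rigorous is the only non-routine part; the reductions of Step 1 and the separation in Step 2 are standard, and for the purposes of this paper it suffices to invoke \cite{Putinar:1993,Lasserre:2009}.
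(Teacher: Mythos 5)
The paper does not actually prove this theorem: it is quoted ``for completeness'' and immediately deferred to \cite{Putinar:1993,Lasserre:2009}, so there is no internal proof to compare against. Your sketch is the standard functional-analytic route to Putinar's Positivstellensatz (Archimedean quadratic module, Eidelheit separation against a linear functional nonnegative on the module, Archimedean moment problem via a GNS/spectral-theorem argument), and at that level of detail it is the right skeleton; the ``only if'' caveat you flag (a certificate only yields nonnegativity, not strict positivity) is also correctly noted.

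There is, however, one genuine gap in Step 1. Adjoining the ball constraint $g_{m+1}=N-\|x\|^2$ enlarges the quadratic module, so the representation your argument delivers is $f=\sigma_0+\sum_{i=1}^m\sigma_i g_i+\sigma_{m+1}\,(N-\|x\|^2)$, which is not the form asserted in the statement (a combination of the given $g_i$ only). You cannot discard the extra term, and compactness of $\mathcal{K}$ alone does \emph{not} make the original module $M(g_1,\dots,g_m)$ Archimedean --- this is exactly the content of the Jacobi--Prestel counterexamples, and it is why Putinar's theorem is usually stated with an Archimedean (or equivalent) hypothesis rather than mere compactness of $\mathcal{K}$. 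The fix under the hypotheses as written in the paper is Putinar's own criterion: a quadratic module $M$ is Archimedean if (and only if) it contains some $p$ with $\{x:p(x)\geq 0\}$ compact; here each generator $g_i$ is assumed to have compact level set, so $p=g_i$ works and the original module is already Archimedean, making your Step 1 adjunction unnecessary. With that substitution (cite the criterion from \cite{Putinar:1993} rather than adding a redundant constraint), the rest of your Steps 1--2, including the boundedness of the multiplication operators from Archimedeanity and the localization of the representing measure to $\mathcal{K}$ via $L(\sigma g_i)\geq 0$, is the standard and correct argument.
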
 

If $K(\cdot)$ is a polynomial, we formulate an SOS optimization problem as follows:
\begin{align}
\!\!\!\underset{S_1,S_2}{\min}&\quad\beta\\
\!\!\!\text{s.t.}&~\beta-K(\delta w) - S_{11}^T\left(\Delta\!-\!\delta w\right)-S_{12}^T\left(\Delta\!+\!\delta w\right)\text{ is SOS}\notag\\
\!\!\!&~\beta+K(\delta w) - S_{21}^T\left(\Delta\!-\!\delta w\right)-S_{22}^T\left(\Delta\!+\!\delta w\right)\text{ is SOS}\notag
\end{align}
which can be solved via semi-definite programming solvers (for details on SOS programming techniques, please refer to \cite{Parrilo:2000, sostools13, Sturm:1999}). Extension to rational function of the form $K(w)=K_1(w)/K_2(w)$ is possible by rewriting $\left|K(\delta w)\right|\leq \beta$ as $\left|K_1(\delta w)\right|\leq \beta\left|K_2(\delta w)\right|$.


\section{Baseline building hourly energy consumption forecast: an HVAC system use case}
\label{sec:Baseline}

In this section, we first describe the commercial HVAC system model, the optimal hourly energy consumption forecast problem, and the problem of analyzing sensitivity of the optimal solutions to input and parametric uncertainties. 

\subsection{Commercial Building HVAC System Model}\label{S:models}
We consider a commercial building HVAC system described as a single duct central air handling unit (AHU) with variable air volume (VAV) terminal boxes with reheat coils, serving the load characteristic to a small office building model from the DOE commercial building prototypes \cite{Thornton:2011,Goel:2014}; in a hot and dry climate where humidity effects can be ignored. The building has five zones with office and conference room utilization schedules. We utilized the EnergyPlus {\cite{Crawley:2000}} model to generate the hourly demand capacity for each building zone based on information on zone temperature set-point, occupancy profile, equipment schedules and weather data. The temperature set-point for each building zone is 22{\textdegree}C during the occupied period. 
The air distribution system is represented in Fig.\,\ref{fig:HVAC}. The AHU enables distribution of conditioned air to the buildings zones using a variable speed supply fan. The flow of supply air into the building zones is controlled by VAV dampers. Each VAV box includes a reheat coil (for additional heating) and a damper (to regulate air-flow rate). The AHU has heating and cooling coils that can enable regulation of the air temperature in the main distribution duct. Heating, reheating and cooling are provided by two hydronic systems served by a gas boiler and an absorption chiller. 
We utilized this model in our previous work \cite{Ramachandran:2017}.
\begin{figure}[ht]
\centering
  \includegraphics[width=\linewidth]{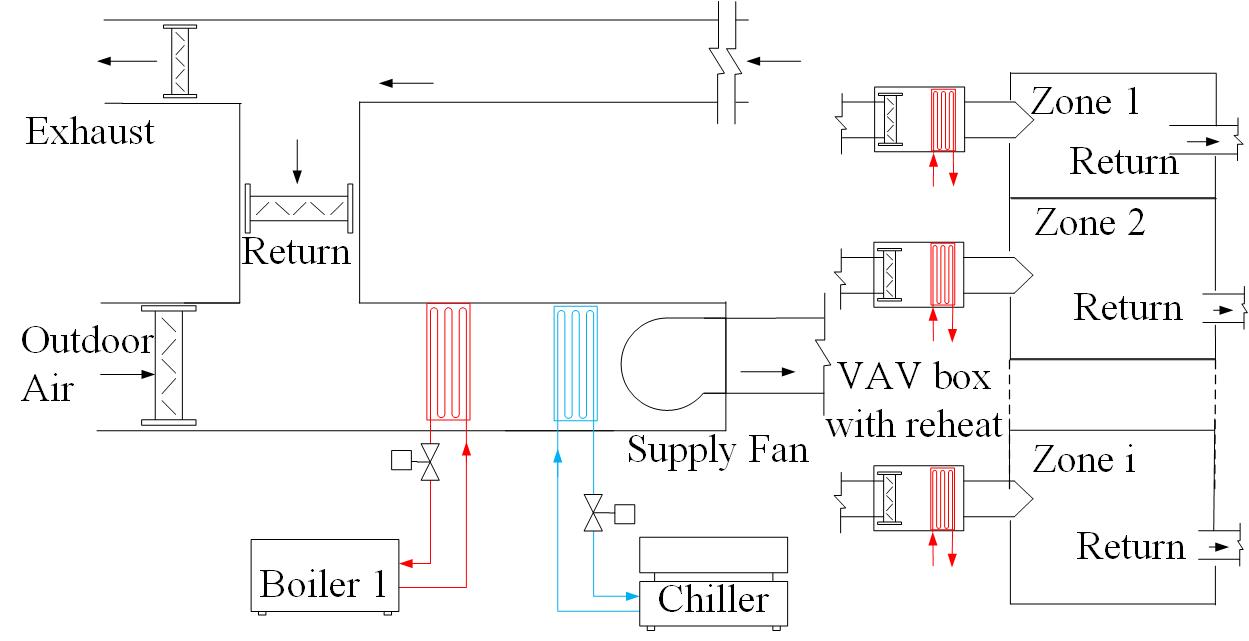}
  \caption{HVAC system schematic representation.}
  \label{fig:HVAC}
\end{figure}
%
The thermal load of each zone-$i$ can be described by the HVAC system operating set-points as:
\begin{align}
\label{eq:Q zone constraints}
\dot{Q}_{zone}^{i} = c_p \dot{m}_{sa}^{i} (T_{da}^{i} - T_{ra}^{i})\,,~\text{where }T_{ra}^i=T_{zone}^{SP,i}\,.
\end{align}
Here $c_p$ denotes the air specific heat capacity, $\dot{m}_{sa}^{i}$ denotes the mass air flow supplied to zone $i$, $T_{da}^{i}$ is the temperature of the air discharged in zone $i$, and $T_{ra}^{i}$ is the {air temperature returned in the AHU duct through re-circulation. We assume that the zone air temperature is maintained at a predefined set-point, denoted $T_{zone}^{SP,i}$, which is equal to the return air temperature}, i.e. $T_{ra}^{i} = T_{zone}^{SP,i}$\,. 
Assuming no air leakage in the supply duct
\begin{align}
\label{eq:msa constraints}
\dot{m}_{sa}\!=\!{\sum}_{i=1}^N \dot{m}_{sa}^{i}
\end{align}
where $\dot{m}_{sa}$ denotes the mass air-flow rate of the supply air to the building zones.
The following ensures that sufficient ventilation is maintained in the building zones:
\begin{align}\label{eq:m oa min}
\dot{m}_{sa}^{i} &\geq \dot{m}_{sa}^{i,min}:= \dfrac{\dot{m}_{sa}}{\dot{m}_{oa}}\,\dot{m}_{oa}^{i, min}\,,
\end{align}
where $\dot{m}_{oa}$ denotes the mass air-flow rate of the outside air; and $\dot{m}_{oa}^{i, min}$ denote the minimum required ventilation in each zone, determined by the available occupancy information. Assuming no heat exchange or air leakage in the return duct, the return air temperature $T_{ra}$ is given by
\begin{align}
\label{eq:Tra constraints}
T_{ra} = \dfrac{\sum_{i=1}^{N}\dot{m}_{sa}^{i}T_{ra}^{i}}{\dot{m}_{sa}}\,,
\end{align}
where {$N$ is the number of zones. Using three dampers in the AHU (see Fig.\,\ref{fig:HVAC}), a portion of the returned air can be mixed with outdoor air and recirculated and conditioned for redistribution in the zones. 
The mixed air temperature $T_{ma}$ is determined by
\mysubeq{}{
\label{eq:Tma constraints}
&T_{ma} \!=\! \dfrac{\dot{m}_{ra} T_{ra} \!+\! \dot{m}_{oa} T_{oa}}{\dot{m}_{sa}}\,,\\
&\dot{m}_{ra} \!+\! \dot{m}_{oa}\!=\!\dot{m}_{sa}\,.
}
$\dot{m}_{ra}$ denotes the mass air flow rate of the recirculated air}. The thermal power provided by the AHU, denoted $\dot{Q}_{ahu}$, is given by:
\mysubeq{eq:Q ahu constraints}{
\dot{Q}_{ahu} &= c_p\dot{m}_{sa}(T_{sa} - T_{ma})\\
\dot{Q}_{ahu} &\in [-Q_{c}^{rated},Q_{b}^{rated}]\,,
}
where $T_{sa}$ is the temperature of the supply air to the building zones. A positive value for $\dot{Q}_{ahu}$ indicates that the AHU heating coil was utilized to regulate the temperature of the supply air. Similarly, when the cooling coil is utilized to regulate the duct air temperature $\dot{Q}_{ahu} < 0$. 

The thermal power delivered by the VAV {terminal} box in each zone-$i$, the boiler and the chiller, denoted by $\dot{Q}_{reheat}^{i},\,\dot{Q}_{b}$ and $\dot{Q}_{c}$, respectively, are given by:
\begin{subequations}
\begin{align}
\dot{Q}_{reheat}^{i} &= c_p \dot{m}_{sa}^{i}(T_{da}^{i} - T_{sa})\label{eq:Q reheat constraints}\\
\dot{Q}_{b} &= {\sum}_{i=1}^N\dot{Q}_{reheat}^{i} + \max\lbrace 0,\dot{Q}_{ahu}\rbrace\label{eq:Qb}\\
\text{and }\dot{Q}_{e} &= \max\lbrace 0,-\dot{Q}_{ahu}\rbrace\,. \label{eq:Qc}
\end{align}\end{subequations}
Denoting the rated values of the boiler and chiller capacities by $\dot{Q}_b^{rated}$ and $\dot{Q}_e^{rated}$, respectively, the following physical
constraints must hold:
\begin{subequations}\label{eq:boil_chill}
\begin{align}
&\dot{Q}_b\in[0,\dot{Q}_b^{rated}]\,,~\dot{Q}_e\in[0,\dot{Q}_e^{rated}]\\
\text{and }&\dot{Q}_{ahu}\in[-\dot{Q}_e^{rated},\dot{Q}_b^{rated}]\,.
\end{align}\end{subequations}

Finally, the efficiency of the HVAC system is determined by the power consumed in the supply fan, the boiler and the chiller. The electrical power used by the supply fan is determined by the mass flow rate of the supply air as
{\begin{subequations}\label{eq:P fan constraints} \begin{align}
P_{fan} &= \dfrac{\Delta P}{\eta_{tot} \rho_{air}} \dot{m}_{design} f_{pl}\\
f_{pl} &= {\sum}_{k=1}^4 c_{f,k}\,f_{flow}^{k-1}\,,~f_{flow} = \dot{m}_{sa} / {\dot{m}_{design}}
\end{align}\end{subequations}
where $f_{flow}$ represents the flow fraction or part-load ratio, $c_{f,k}\,,\,k\!\in\!\lbrace 1,2,3,4\rbrace$, are parameters that capture the fan efficiency, $\eta_{tot}$ represents fan total efficiency, $\Delta P$ represents fan design pressure increase, $\rho_{air}$ represents air density, and  $\dot{m}_{design}$ is the design (maximum) air flow. The power consumed by the boiler is given by:
{\begin{subequations}\label{eq:P boiler constraints} \begin{align}
P_{boiler} &\!=\! \dfrac{\dot{Q}_{b}}{\eta_{thermal}\, \eta_{eff}}\\
\eta_{eff} &\!=\! {\sum}_{k=1}^3c_{b,k}\,{PLR_{b}}^{k\!-\!1},~PLR_{b} \!=\! {\dot{Q}_{b}} / {\dot{Q}_{b}^{rated}}\!
\end{align}\end{subequations}
where 
$PLR_{b}$ denotes the boiler part-load ratio, $\eta_{eff}$ represents boiler efficiency curve, $\eta_{thermal}$ represents nominal thermal efficiency, and the coefficients $c_{b,k}\,,\,k\!\in\!\lbrace 1,2,3\rbrace,$ describe the boiler performance. The power consumption of the chiller is given by:
\begin{subequations}\label{eq:P chiller constraints}\begin{align}
\!\!P_{chiller} &\!=\! f_{gen} \dot{Q}_{e} + P_{pump} \\
f_{gen} &\!=\! {\sum}_{k=1}^3c_{g,k}\,{PLR_{e}}^{k\!-\!2}, ~PLR_{e} \!=\! \dot{Q}_{e} / \dot{Q}_{e}^{rated}\!
\end{align}\end{subequations}
where 
$PLR_{c}$ denotes the chiller part load ratio, $f_{gen}$ represents generator heat input ratio,$P_{pump}$ denotes the pump power, and the coefficients $c_{g,k}\,\,k\in\lbrace 1,2,3\rbrace,$ describe the chiller efficiency.
The nominal values for the parameters characterizing the fan, boiler, and chiller models used in this study are as follows: number of zones $N$ = 5; $c_p$ = 1005 J/(kg-K); (\textit{Fan}) $\Delta P$ = 1000 kg/(m-s)$^2$, $\eta_{tot}$ = 0.7, $\rho_{air}$ = 1.225 kg/m$^3$, $\dot{m}_{design}$ = 2.98 kg/s, $c_{f,1}$ = 0.3507, $c_{f,2}$ = 0.3085, $c_{f,3}$ = -0.5413, $c_{f,4}$ = 0.8719; (\textit{Boiler}) $Q_{b}^{rated}$ = 1.09$\times$10$^{8}$ J/hr, $\eta_{thermal}$ = 0.8, $c_{b,1}$ = 0.97, $c_{b,2}$ = 0.0633, $c_{b,3}$ = -0.0333; (\textit{Chiller}) $Q_{e}^{rated}$ = $1.47\times 10^8$ J/hr, $P_{pump}$ = 1.8$\times$10$^6$ J/hr, $c_{g,1}$ = 0.03303, $c_{g,2}$ = 0.6852, $c_{g,3}$ = 0.2818.

\subsection{Optimal Baseline Determination}
We select as \emph{baseline} the operational scenario corresponding to the minimal hourly energy consumption profile, estimated under the assumption of perfect system model parameters, sensors information and internal and external load forecast. The buildings energy management systems can then compute the set-point values for equipment controllers to enable most efficient operation of the building HVAC system. Any flexibility that the buildings may provide to the grid operations are measured as changes to this baseline operation. Accurate determination of the baseline power consumption profile is, therefore, critical for successful grid integration of responsive buildings. 

As shown in \cite{Ramachandran:2017}, the problem of determining optimal operational baseline can be formulated as a constrained nonlinear optimization. The supervisory set-points which serve as the decision variables for optimization problem are: {$\dot{m}_{oa}$\,, $\dot{m}_{ra}$\,, $T_{sa}$\,, and $T_{da}^{i}$\,, $\dot{m}_{sa}^{i}$ (for each building zone $i$).} The decision variables and the associated limits of their values for the specific use case are presented in Table\,\ref{tab:dv}. The optimization problem is given by \cite{Ramachandran:2017}:
\begin{align*} 
\underset{T_{sa},\dot{m}_{oa},\dot{m}_{ra},T_{da}^{i},\dot{m}_{sa}^{i}}{\text{minimize}}&~ 
\alpha_{el}\left(P_{fan} \!+\! P_{chiller}\right) \!+\! \alpha_{ng}\,P_{boiler}\,,\\
\text{subject to:}\quad&~\text{constraints in \eqref{eq:Q zone constraints}-\eqref{eq:boil_chill} and Table\,\ref{tab:dv}\,.}
\end{align*}
where $\alpha_{el}\!=\!3.167$ and $\alpha_{ng}\!=\!1.084$ are the \textit{source conversion factors} associated with electricity (which drives the fan and the chiller) and natural gas (which drives the boiler), respectively, and refer to the actual energy need to be purchased in order to drive the HVAC system. In order to solve the problem, it is assumed that the chiller, boiler and fan parameters are known; hourly thermal load profiles for each zone ($\dot{Q}_{zone}^i$) have been generated using EnergyPlus \cite{Crawley:2000}; and that outside air temperature forecast is available. The optimization problem is scripted in MATLAB and solved via IPOPT \cite{Wachter:2006}.

\begin{table*}[t]
\centering
\caption{Decision Variables: notation and associated constraints}\label{tab:dv}
\begin{tabular}{ | c | c | c | c |}
\hline
Variable & Notation & Lower Limit & Upper Limit \\ \hline
supply air temperature & $T_{sa}$ & $12^{\circ}$C & $37^{\circ}$C \\ \hline
outside air mass flow rate & $\dot{m}_{oa}$ & $\sum_{i = 1}^N \dot{m}_{oa}^{i,min}$ & $\dot{m}_{design}$ \\ \hline
recirculated air mass flow rate & $\dot{m}_{ra}$ & 0 & $\dot{m}_{design}$ \\ \hline
discharge air temperature in building zone $i$ & $\dot{T}_{da}^i$ & $T_{sa}$ & $37^{\circ}$C \\ \hline
supply air mass flow rate in building zone $i$ & $\dot{m}_{sa}^i$ & $\sum_{i = 1}^N m_{sa}^{i,min}$ & $\dot{m}_{design}$ \\ \hline
\end{tabular} 
\end{table*}

\section{Numerical Example}
\label{sec:Example}

We utilize the approach described in Section \ref{sec:Methodology} the sensitivity of a computed baseline profile to uncertainties in model parameters and input values. Specifically in this paper, we consider the following set of parameters and input values for uncertainty analysis: 1) the outside air temperature ($T_{oa}$), and 2) the fan efficiency parameters $c_{f,k}\,\forall k=1,2,3,4$\,. We are interested in analyzing the impact of errors in estimated values of (possibly a subset of) $w$ on the optimal value of baseline power consumption. We model the deviations as a fraction of the nominal values, i.e.
\begin{align*}
\Delta=\alpha\,\left|w_0\right|,
\end{align*}
for some chosen $\alpha$\,. Furthermore, we are interested in exploring which of the input data and parameters the optimal baseline value is most sensitive to, and how this sensitivity may change under different operating scenarios.


%
\begin{figure*}[thpb]
\centering
\captionsetup{justification=centering}
\subfigure[hot day]{
\includegraphics[scale=0.317]{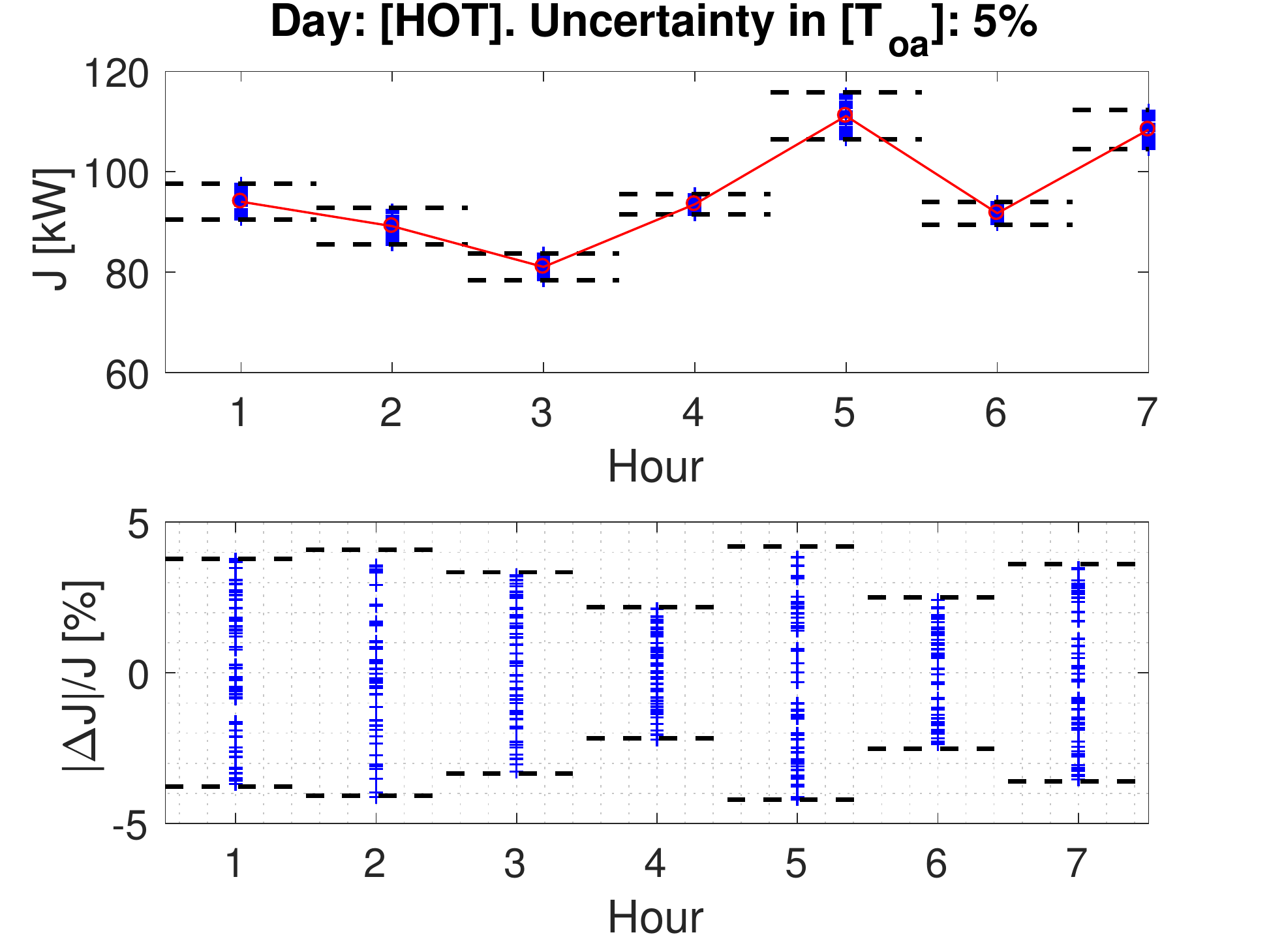}\label{F:toa_hot_5}
}
\hspace{-0.37in}
\subfigure[moderate day]{
\includegraphics[scale=0.317]{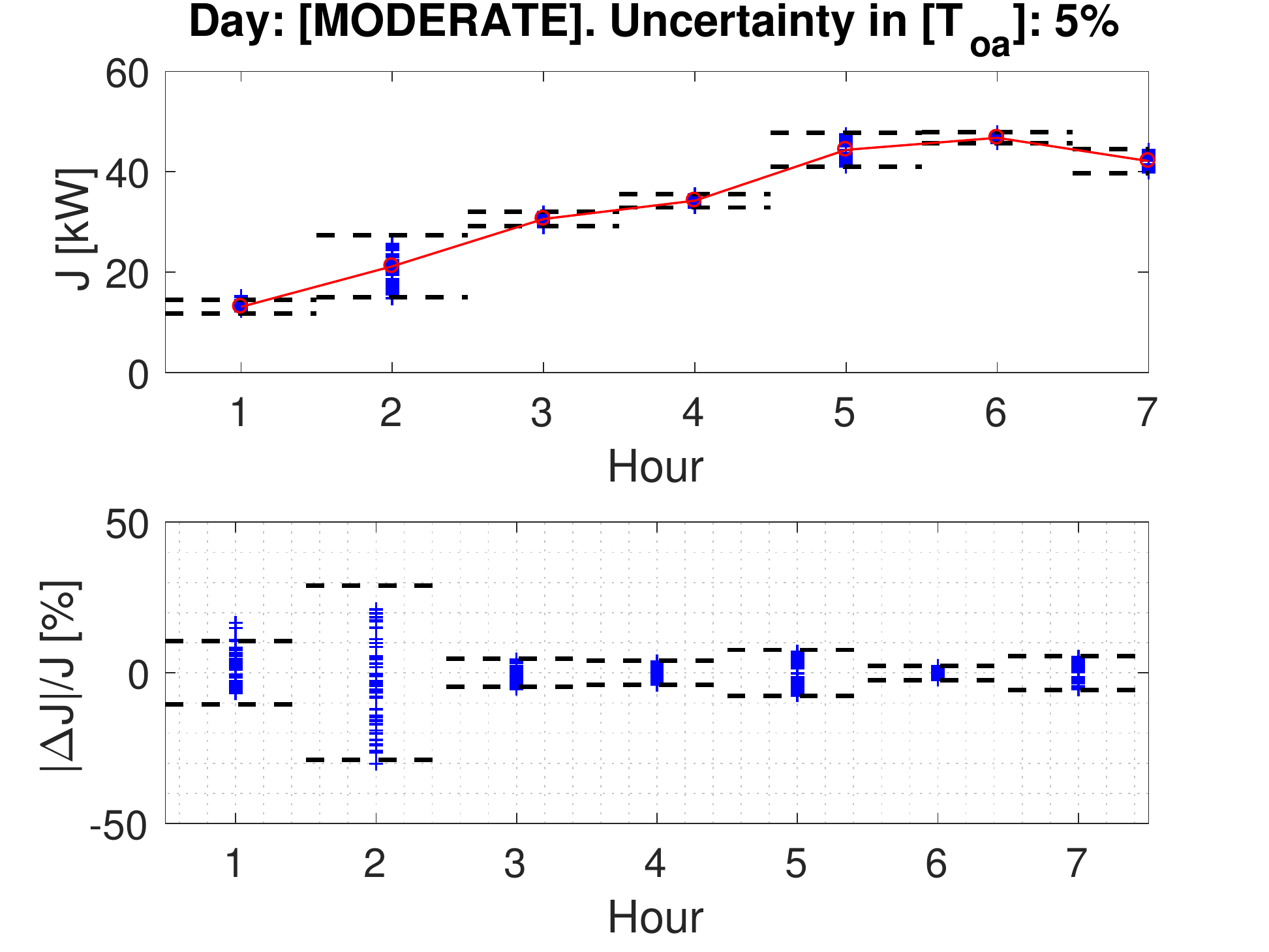}\label{F:toa_mod_5}
}
\hspace{-0.37in}
\subfigure[cold day]{
\includegraphics[scale=0.317]{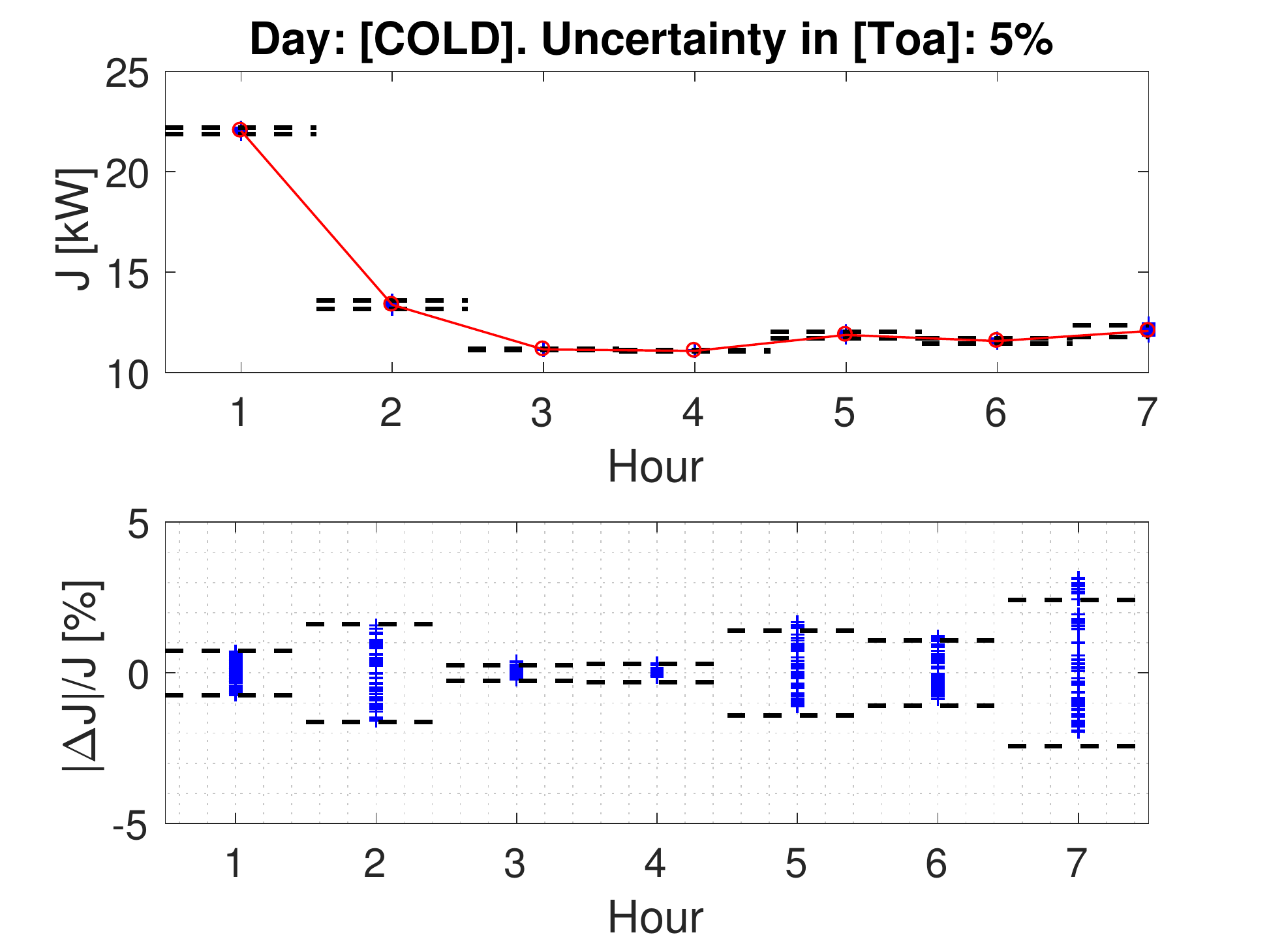}\label{F:toa_cold_5}
}
\caption[Optional caption for list of figures]{Baseline power consumption profiles for a commercial HVAC system for hot, cold and moderate days, and its changes (absolute as well as relative) with respect to $\pm 5\%$ change in outside air temperature.}
\label{F:toa_5}
\end{figure*}

To facilitate the analysis, we use EnergyPlus to simulate the building HVAC system models for typical load profiles for seven hours during the building occupied period, and for three external load profiles characterized as hot, cold and moderate. In Fig.\,\ref{F:toa_5} we present the results of the sensitivity analysis in the form of changes to optimal cost function, when the outside air temperature has $\pm 5\%$ uncertainty (around nominal). We present both the absolute as well as relative changes in the baseline power consumption profile, at an hourly basis for the three chosen days. As expected, the hot day experiences higher HVAC power consumption. Interestingly, the sensitivity of the optimal solution is highest on the moderate day. This is in line with the observation made in \cite{Ramachandran:2017} that when the outside temperature is moderate the HVAC system operates in an \textit{economizing} mode bringing in a lot of outside air, which is why the operation becomes very sensitive to the outside temperature. In both hot and cold day, the air-flow from outside is usually lowest rendering the operation less sensitive to changes in outside temperature.

\begin{figure*}[thpb]
\centering
\captionsetup{justification=centering}
\subfigure[hot day]{
\includegraphics[scale=0.317]{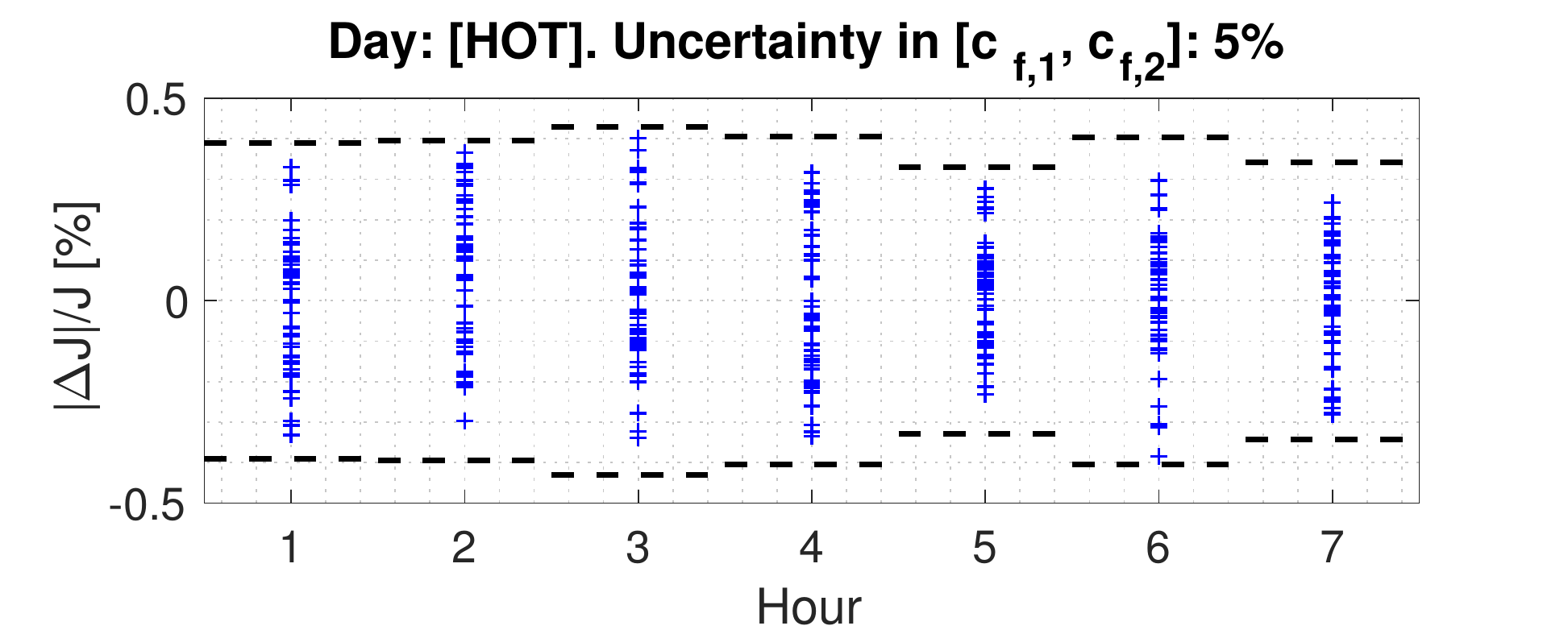}\label{F:fan_hot_5}
}
\hspace{-0.37in}
\subfigure[moderate day]{
\includegraphics[scale=0.317]{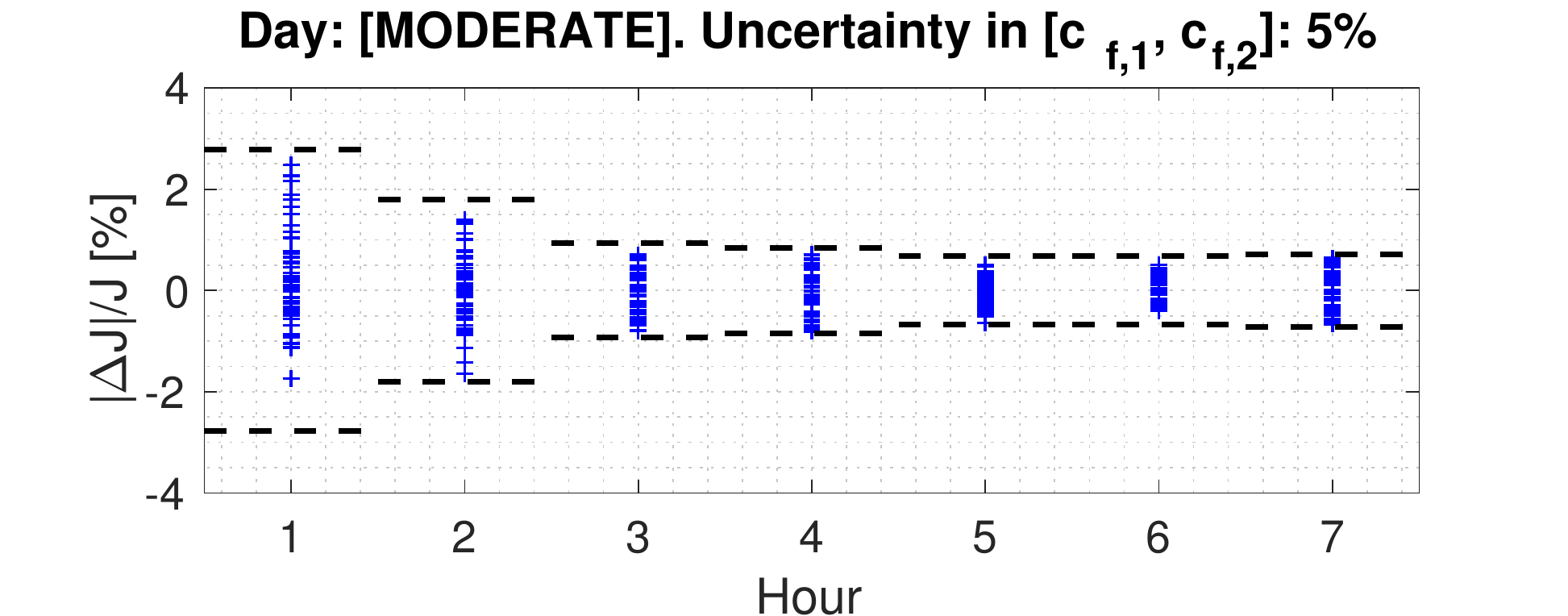}\label{F:fan_mod_5}
}
\hspace{-0.37in}
\subfigure[cold day]{
\includegraphics[scale=0.317]{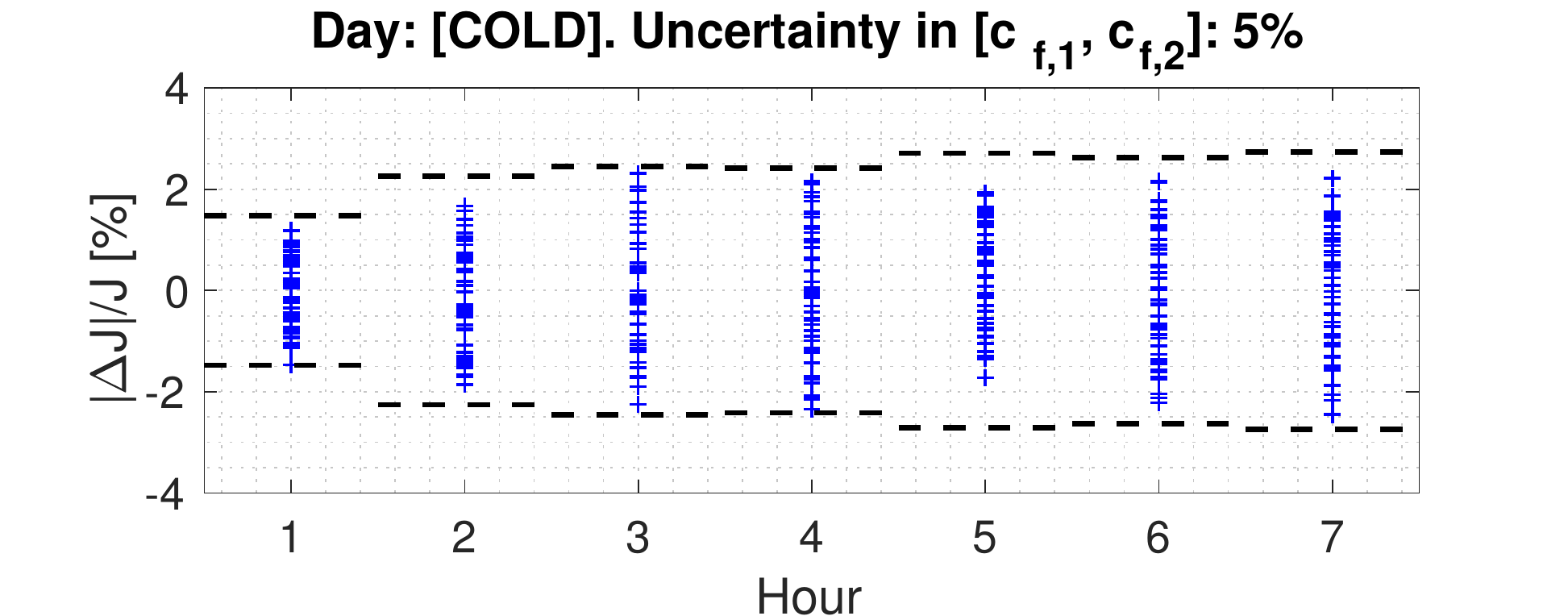}\label{F:fan_cold_5}
}
\caption[Optional caption for list of figures]{Sensitivity of baseline power consumption to $\pm 5\%$ change in fan efficiency parameters.}
\label{F:fan_5}
\end{figure*}
Next we present the sensitivity analysis to fan efficiency parameters in Fig.\,\ref{F:fan_5}. It can be noted that the sensitivity of the baseline hourly optimal energy consumption to uncertainty in the fan parameters is lowest during the hot day, with less than $\pm$0.5\% relative change in optimal cost.  We observe that during a hot day the chiller would be consuming most of the total system power consumption making the contribution of fan power relatively small compared with the baseline power consumption. The sensitivity increases to about $\pm$3\% on cold day, and during some hours in a day with moderate moderate values for the outdoor air temperature.


\section{Conclusion}
We developed a methodology for formally quantifying the sensitivity of optimal hourly energy consumption, a baseline energy consumption model, to various sources of parametric and measurement uncertainties. We then demonstrate the method for a use case building HVAC example.
This approach can be utilized to provide hourly energy consumption forecasts, and most importantly the associated uncertainty around those forecasts, to resource aggregators. This will enable participation of building energy systems, typical energy consumers, to grid operation and grid service markets in a risk aware context.


\section*{Acknowledgment}

This work has been supported by the the Buildings Technologies Office of the U.S. Department of Energy’s Office of Energy Efficiency and Renewable Energy. 

\IEEEtriggeratref{17}


\bibliographystyle{IEEEtran}
\bibliography{IEEEabrv,mybibfile,RefKundu,RefList,references}
%


\end{document}